\newtheorem{thm}{Theorem}[section]
\newtheorem{prop}[thm]{Proposition}
\newtheorem{cor}[thm]{Corollary}
\theoremstyle{definition}
\theoremstyle{remark}
\newtheorem{rem}[thm]{Remark}
\numberwithin{equation}{section}
\DeclareMathOperator{\Q}{\mathbb{Q}}
\DeclareMathOperator{\sO}{\mathcal{O}}
\DeclareMathOperator{\bP}{\mathbb{P}}
\DeclareMathOperator{\bA}{\mathbb{A}}
\DeclareMathOperator{\CH}{CH}
\DeclareMathOperator{\K3}{K3}
\DeclareMathOperator{\cl}{cl}
\DeclareMathOperator{\Id}{Id}
\def\Z{\mathbb Z}
\def\C{\mathbb C}
\begin{document}

\title[The integral Hodge conjecture for threefolds of Kodaira dimension $0$]
{Counterexamples to the integral Hodge conjecture for threefolds of Kodaira dimension zero}%
\title[Curve classes on the product of a curve and an Enriques surface]
{Curve classes on the product of a curve and an Enriques surface}%
\title[The integral Hodge conjecture for threefolds of Kodaira dimension $0$]
{Failure of the integral Hodge conjecture for threefolds of Kodaira dimension zero}

\author{Olivier Benoist}
\address{D\'epartement de math\'ematiques et applications, \'Ecole normale sup\'erieure,
45 rue d'Ulm, 75230 Paris Cedex 05, France}
\email{olivier.benoist@ens.fr}

\author{John Christian Ottem}
\address{University of Oslo, Box 1053, Blindern, 0316 Oslo, Norway}
\email{johnco@math.uio.no}

\begin{abstract}
We prove that the product of an Enriques surface and a very general curve of genus at least $1$ does not satisfy the integral Hodge conjecture for $1$-cycles. This provides the first examples of smooth projective complex threefolds of Kodaira dimension zero for which the integral Hodge conjecture fails, and the first examples of non-algebraic torsion cohomology classes of degree $4$ on smooth projective complex threefolds. 
\end{abstract}
\maketitle
\thispagestyle{empty}

\section*{Introduction}\label{intro}

  The integral Hodge conjecture for codimension $k$ cycles on a smooth complex projective variety $X$ asserts that the image of the cycle class map  in Betti cohomology  $\cl:\CH^k(X)\to H^{2k}(X,\Z)$ coincides with the subgroup of integral Hodge classes: those integral cohomology classes that are of type $(k,k)$ in the Hodge decomposition. This statement always holds if $k=0$ or $k=\dim(X)$ for trivial reasons, and for $k=1$ by the Lefschetz $(1,1)$ theorem.

  The first counterexamples to this statement, i.e. the first examples of integral Hodge classes that are not algebraic, were discovered by Atiyah and Hirzeburch \cite{AH}. Much simpler examples, relying on a degeneration argument, were found later by Koll\'ar \cite{Trento}, showing that the integral Hodge conjecture may already fail for $1$-cycles on threefolds.

  Voisin showed that, despite these counterexamples, the integral Hodge conjecture may hold in interesting special cases if one imposes restrictions on the geometry of the variety $X$. For instance, she proved in \cite[Theorem 2]{Voisin3} that it holds for $1$-cycles on threefolds $X$ that are either uniruled, or satisfy $K_X=0$ and $H^2(X,\sO_X)=0$, and Totaro announced a proof that, in the latter case, the hypothesis that $H^2(X,\sO_X)=0$ may be removed. Voisin also conjectured that it should hold for $1$-cycles on rationally connected varieties of arbitrary dimensions \cite[Question 16]{VoisinHC} (see \cite{HV,Floris} for partial results in this direction).

 These positive or conjectural statements are close to be optimal. The integral Hodge conjecture can fail for codimension $2$ cycles on rationally connected varieties \cite[Th\'eor\`eme 1.3]{CTV}, even for fourfolds \cite[Corollary 1.6]{Stefan}. It is also known to fail for $1$-cycles on some threefolds of Kodaira dimension $1$ \cite[Theorem 3.1]{Totaro}, as well as on some threefolds such that $H^i(X,\sO_X)=0$ for all $i>0$ \cite[Proposition 5.7]{CTV}. The case of $1$-cycles on threefolds of Kodaira dimension $0$, raised in \cite[\S 3]{Totaro}, remained open. In this paper, we provide counterexamples in this situation:

\begin{thm}
\label{mainth}
Let $S$ be an Enriques surface over $\C$, and let $g\geq 1$ be an integer. Then, if $B$ is a very general smooth projective curve of genus $g$ over $\C$,  the integral Hodge conjecture for $1$-cycles on $B\times S$ does not hold.
\end{thm}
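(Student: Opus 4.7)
The plan is to decompose $H^{4}(B\times S,\Z)$ via Künneth, isolate a $(\Z/2)^{2g}$ subgroup as the only possible obstruction to the integral Hodge conjecture, and rule out its algebraicity for $B$ very general by a monodromy argument in the moduli space of curves. Since $H^{\ast}(B,\Z)$ is torsion-free, Künneth combined with the Enriques surface facts $H^{3}(S,\Z)=\Z/2$ (generated by the Brauer class $\alpha_{S}$ of $S$) and $H^{2}(S,\Z)=\Z^{10}\oplus\Z/2$ gives
\[
H^{4}(B\times S,\Z)\;\cong\;\Z^{11}\oplus(\Z/2)^{2g+1};
\]
as $H^{2,0}(S)=0$, every such class is Hodge. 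The tautological algebraic cycles $B\times\{\mathrm{pt}\}$ and $\{b\}\times D$ for $D\in\mathrm{NS}(S)$ (including the $2$-torsion class $K_{S}$) span the subgroup $\Z^{11}\oplus\Z/2$. The residual potential obstruction therefore lies in $(\Z/2)^{2g}\subset H^{1}(B,\Z)\otimes H^{3}(S,\Z)$, which via $\alpha_{S}$ is identified with $H^{1}(B,\Z/2)$.

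I next compute the contribution to this $(\Z/2)^{2g}$-component coming from algebraic cycles. A $1$-cycle splits into vertical pieces $\{b\}\times C$ (which contribute only to $H^{2}(B)\otimes H^{2}(S)$) and irreducible horizontal curves $\Gamma$ dominating $B$. For such $\Gamma$, let $\nu\colon\tilde\Gamma\to\Gamma$ be the normalisation, and set $f=p\circ\nu\colon\tilde\Gamma\to B$ and $g=q\circ\nu\colon\tilde\Gamma\to S$. The projection formula, together with the Poincaré pairing on $S$ — which identifies $\alpha_{S}\bmod 2\in H^{3}(S,\Z/2)$ with the Poincaré dual of the class $w_{S}\in H^{1}(S,\Z/2)$ classifying the K3 double cover $\tilde S\to S$ — shows that the $H^{1}(B,\Z/2)$-component of $[\Gamma]$ equals
\[
\phi(\Gamma)\;:=\;f_{\ast}\bigl(g^{\ast}w_{S}\bigr)\;\in\;H^{1}(B,\Z/2).
\]

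It remains to show $\phi(\Gamma)=0$ whenever $B$ is very general. Suppose not: then a non-zero $\alpha\in H^{1}(B,\Z/2)$ is realised as $\phi(\Gamma)$ for some algebraic $\Gamma\subset B\times S$. By standard spreading through the appropriate relative Hilbert scheme, possibly after restricting to a dense open $U\subset\mathcal{M}_{g}$, the cycle $\Gamma$ deforms to a family of cycles $\Gamma_{b}\subset B_{b}\times S$ giving a section $b\mapsto\phi(\Gamma_{b})$ of the local system $R^{1}\pi_{\ast}(\Z/2)$ on $U$, where $\pi\colon\mathcal{C}\to\mathcal{M}_{g}$ is the universal curve. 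Such a section must be monodromy-invariant; but the monodromy action of $\pi_{1}(U)$ on $R^{1}\pi_{\ast}(\Z/2)$ surjects onto the full symplectic group $\mathrm{Sp}_{2g}(\Z/2)$, which has no non-zero fixed vector for $g\geq 1$. Hence $\phi(\Gamma_{b})=0$ for every $b\in U$, contradicting $\alpha\ne 0$. The main technical obstacle is exactly this spreading step: showing that any hypothetical algebraic cycle on $B\times S$ for very general $B$ extends over a Zariski-dense open subset of the moduli space, which requires a careful analysis of the components of the relative Hilbert scheme of $B\times S$ as $B$ varies.
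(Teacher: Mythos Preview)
Your K\"unneth analysis and the identification of the obstruction group with $H^1(B,\Z/2)$ are correct and agree with Proposition~\ref{whatisIHC} of the paper. The problem lies entirely in the monodromy step, and it is not merely the technical spreading issue you flag at the end.

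When you spread the cycle $\Gamma$ from a very general fibre, the relevant components of the relative Hilbert scheme dominate the base but need not admit sections over any open $U\subset\mathcal{M}_g$; one only obtains a family $\Gamma_b$ after passing to a finite cover $U'\to U$ (this is exactly what the paper does in the proof of Proposition~\ref{elliptic}). The section $b\mapsto\phi(\Gamma_b)$ is then defined on $U'$ and is only $\pi_1(U')$-invariant. Since $\mathrm{Sp}_{2g}(\Z/2)$ is finite, nothing prevents $U'$ from factoring through the full level-$2$ cover of $\mathcal{M}_g$, over which $R^1\pi_*(\Z/2)$ is a trivial local system and the monodromy constraint is vacuous. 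So one cannot conclude $\phi(\Gamma_b)=0$.

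More fundamentally, even if one reformulates the argument in terms of the image subgroup $I_B=\mathrm{Im}(\phi)\subset H^1(B,\Z/2)$ --- which one can indeed show is $\pi_1(\mathcal{M}_g)$-invariant for very general $B$ --- monodromy only yields the dichotomy $I_B=0$ or $I_B=H^1(B,\Z/2)$. Your argument supplies no mechanism whatsoever to exclude the second alternative, which is exactly the statement that the integral Hodge conjecture \emph{holds} on $B\times S$. That this alternative genuinely needs to be ruled out, and cannot be excluded on formal grounds, is illustrated by Section~\ref{IHCholds} of the paper, where pairs $(S,E)$ are exhibited for which the integral Hodge conjecture on $E\times S$ does hold. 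The paper closes this gap by a degeneration (Proposition~\ref{elliptic}): it specialises the base curve so that a chosen \'etale double cover limits to an irreducible cover of a nodal rational curve, while any cover of the form $Z^*\alpha$ coming from a correspondence extends to an \'etale cover of the resolution and hence must split over the normalisation $\bP^1$. This geometric input, absent from your proposal, is what forces $I_B\neq H^1(B,\Z/2)$.
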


 In Section \ref{IHCholds}, we show that the integral Hodge conjecture may nevertheless still hold for some particular products of an Enriques surface and an elliptic curve.

\begin{cor}
\label{maincoro}
There exists a smooth projective threefold of Kodaira dimension $0$ over $\C$ that does not satisfy the integral Hodge conjecture for $1$-cycles.
\end{cor}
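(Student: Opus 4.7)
The plan is to specialize Theorem \ref{mainth} to the case $g=1$, so that the threefold in question becomes the product of an Enriques surface with an elliptic curve, and then to verify that such a product has Kodaira dimension zero. Granting the theorem, the corollary is essentially immediate.

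Concretely, I would fix any Enriques surface $S$ and a very general elliptic curve $B$ over $\C$, and set $X:=B\times S$. This is visibly smooth, projective, and of dimension three. By the Künneth formula for the canonical bundle of a product,
\[
\omega_X \;\cong\; p_B^*\omega_B\otimes p_S^*\omega_S.
\]
Since $B$ has genus $1$, $\omega_B\cong\sO_B$; since $S$ is Enriques, $\omega_S$ is $2$-torsion in $\mathrm{Pic}(S)$. Hence $\omega_X\cong p_S^*\omega_S$ is $2$-torsion, so $h^0(X,\omega_X^{\otimes m})$ equals $1$ for even $m$ and $0$ for odd $m$, giving $\kappa(X)=0$.

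Applying Theorem \ref{mainth} with $g=1$ to this very general $B$ then shows that the integral Hodge conjecture for $1$-cycles fails on $X$, as required. There is no real obstacle at this stage: the substantive content lies entirely in Theorem \ref{mainth}, and the only additional input needed for the corollary is the trivial computation of $\kappa(X)$ above.
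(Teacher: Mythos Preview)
Your argument is correct and coincides with the paper's own deduction: take $g=1$ in Theorem~\ref{mainth}, so that $X=B\times S$ with $B$ a very general elliptic curve, and observe that $2K_X=0$ (hence $\kappa(X)=0$) via the K\"unneth formula for the canonical bundle. There is nothing to add.
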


 Corollary \ref{maincoro} is deduced from Theorem \ref{mainth} by taking $g=1$. Thus, the threefold $X$ that we consider is the product of an Enriques surface and an elliptic curve. It satisfies $2K_X=0$, but $K_X\neq 0$, illustrating the sharpness of Voisin's theorem \cite[Theorem 2]{Voisin3}, and of its  aforementioned improvement by Totaro. In Corollary \ref{allKodairadim}, we explain how to deduce counterexamples to the integral Hodge conjecture in essentially all possible degrees, dimensions and Kodaira dimensions from this result.

\begin{cor}
\label{secondcoro}
There exists a smooth projective threefold $X$ over $\C$ carrying a non-algebraic torsion cohomology class of degree $4$.
\end{cor}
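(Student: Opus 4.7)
The plan is to deduce Corollary \ref{secondcoro} directly from Theorem \ref{mainth} with $g=1$ by showing that, on $X := B\times S$, every integral Hodge class in $H^4(X,\Z)$ is a sum of an algebraic class and a torsion class. Once this is established, the non-algebraic Hodge class $\alpha$ supplied by Theorem \ref{mainth} decomposes as $\alpha = \gamma + \tau$ with $\gamma$ algebraic and $\tau$ torsion; since $\alpha$ is not algebraic, neither is $\tau$, yielding the required non-algebraic torsion cohomology class of degree $4$.

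The key ingredient is a K\"unneth decomposition. Since $H^\ast(B,\Z)$ is torsion-free, the K\"unneth formula gives
\[
H^4(X,\Z) \;=\; H^4(S,\Z) \;\oplus\; \bigl(H^1(B,\Z)\otimes H^3(S,\Z)\bigr) \;\oplus\; \bigl(H^2(B,\Z)\otimes H^2(S,\Z)\bigr).
\]
Because $S$ is Enriques, $h^{1,0}(S)=0$, so $H^1(S,\Z)$ and $H^3(S,\Z)$ have rank zero (each is isomorphic to $\Z/2$), and the middle summand is therefore entirely torsion. Modulo torsion we obtain
\[
H^4(X,\Z)/\mathrm{tors} \;\cong\; H^4(S,\Z) \;\oplus\; \bigl([\mathrm{pt}]_B \otimes (H^2(S,\Z)/\mathrm{tors})\bigr).
\]

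I then argue that every Hodge class in this torsion-free quotient is algebraic. The first summand is generated by the class of the curve $B\times\{\mathrm{pt}\}$. A Hodge class $[\mathrm{pt}]_B \otimes \bar\beta$ in the second summand lifts to an integral $(1,1)$-class $\beta \in H^2(S,\Z)$, which by the Lefschetz $(1,1)$ theorem equals $c_1(\sO_S(D))$ for some divisor $D$ on $S$; the cycle $\{\mathrm{pt}\}\times D$ then represents $[\mathrm{pt}]_B \otimes \beta$. This establishes the sum-decomposition and completes the proof. The substantive difficulty is entirely absorbed by Theorem \ref{mainth}; the deduction is a short K\"unneth bookkeeping, and the only point to handle carefully is that the vanishing of $h^{1,0}(S)$ for an Enriques surface is exactly what forces $H^1(S,\Z)$ and $H^3(S,\Z)$ to contribute only torsion to $H^4(X,\Z)$, so that the failure of the integral Hodge conjecture produced by Theorem \ref{mainth} must manifest itself in the torsion part.
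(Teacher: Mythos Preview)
Your argument is correct and is essentially the same as the paper's: both use the K\"unneth decomposition of $H^4(B\times S,\Z)$, the Lefschetz $(1,1)$ theorem for $S$, and the fact that $H^3(S,\Z)\simeq\Z/2$ is pure torsion, to conclude that any non-algebraic class on $B\times S$ differs from an algebraic one by a torsion class. The paper phrases this slightly more directly (Remark~\ref{2torsion}): since $H^2(S,\sO_S)=0$, \emph{all} of $H^2(S,\Z)$ is algebraic, so the first and third K\"unneth summands are entirely algebraic and the non-algebraic class must already live in the $2$-torsion middle summand $H^1(B,\Z)\otimes H^3(S,\Z)$; you instead pass to $H^4/\mathrm{tors}$ first, which is a harmless cosmetic detour.
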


  By Remark \ref{2torsion} below, any variety as in Theorem \ref{mainth} satisfies the condition of Corollary \ref{secondcoro}. Examples of non-algebraic torsion cohomology classes of even degree were constructed by Atiyah and Hirzebruch \cite{AH} in dimension $7$ and by Soul\'e and Voisin \cite[Theorem 3]{SV} in dimension $5$. As far as we know, and as indicated in \cite[p. 2]{Totaro}, no $3$-dimensional example was previously known. 

\vspace{1em}

Let us briefly outline the construction of the counterexamples. Section \ref{sec1} is devoted to understanding what it means for the product $X=S\times B$ of an Enriques surface $S$ and a smooth projective curve $B$ to satisfy the integral Hodge conjecture for 1-cycles.  In Proposition \ref{whatisIHC}, we show that it is equivalent to every finite \'etale cover of degree $2$ of $B$ being induced, by means of an algebraic correspondence, by the $\K3$ cover of $S$.

  To prove Theorem \ref{mainth}, we need to contradict this statement for some curves $B$. We use a degeneration argument: if all finite \'etale covers of degree $2$ of all smooth projective curves of genus $g$ were induced, through a correspondence, by the universal cover of $S$, then the same would be true for degenerations of such covers. But some of these degenerations are ramified (for instance, some of Beauville's admissible covers \cite{Beauville}), giving a contradiction.

The idea of using degeneration techniques to contradict the integral Hodge conjecture was initiated by Koll\'ar \cite{Trento} and further developed by Totaro \cite{Totaro}. Our argument differs from theirs in the sense that it does not rely on the analysis of algebraic cycles on the limit variety. Colliot-Th\'el\`ene pointed out to us that our degeneration argument is similar to one used by Gabber in \cite[Appendix]{CTGabber} to construct unramified Brauer classes on smooth projective varieties whose period and index differ, and that it is possible to give an alternative proof of Theorem \ref{mainth} using the specialization arguments of \emph{loc.\ cit.} and the reformulation of the integral Hodge conjecture in terms of unramified cohomology (see \cite{colliot}).

\subsection*{Acknowledgements} We would like to thank J\o rgen Vold Rennemo, Burt Totaro, Claire Voisin and Olivier Wittenberg for useful discussions. We are also grateful to Jean-Louis Colliot-Th\'el\`ene for drawing our attention to the reference \cite{CTGabber}.

\bigskip

\section{The product of a curve and an Enriques surface}
\label{sec1}

We fix an Enriques surface $S$ over $\C$ and denote by $\alpha\in H^1(S,\Z/2)$ the class corresponding to its degree $2$ finite \'etale cover 
by a $\K3$ surface.
Let $B$ be a smooth projective curve over $\C$. We consider the threefold $X=B\times S$, with projections $p_1:X\to B$ and $p_2:X\to S$.

\begin{prop}
\label{whatisIHC}
The integral Hodge conjecture holds for $1$-cycles on $X$ if and only if for every $\beta\in H^1(B,\Z/2)$, there exists a correspondence $Z\in \CH_1(B\times S)$ such that $Z^*\alpha=\beta$.
\end{prop}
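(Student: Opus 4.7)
The plan is to compute $H^4(X,\Z)$ by Künneth, identify which Hodge classes are algebraic for trivial reasons, and translate algebraicity of the remaining classes into the stated condition on $Z^*\alpha$.

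I first recall the cohomology of the Enriques surface $S$: $H^0(S,\Z)=H^4(S,\Z)=\Z$, $H^1(S,\Z)=0$, $H^2(S,\Z)=\Z^{10}\oplus \Z/2$, and $H^3(S,\Z)=\Z/2$, with the torsion in $H^2$ generated by $K_S$. Since $H^{2,0}(S)=0$, every class in $H^2(S,\Z)$ is Hodge, and is the class of a divisor (the free part by the Lefschetz $(1,1)$ theorem, the torsion part by taking $K_S$ itself). The Künneth formula, whose Tor terms in degree $4$ all vanish, gives
\[ H^4(X,\Z)=H^4(S,\Z)\oplus \bigl(H^1(B,\Z)\otimes H^3(S,\Z)\bigr)\oplus \bigl(H^2(B,\Z)\otimes H^2(S,\Z)\bigr), \]
and each summand consists of Hodge classes (the middle one because it is torsion).

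The first and third summands are manifestly algebraic: $H^4(S,\Z)$ is generated by $[B\times \{s\}]$ for any $s\in S$, and every element of $H^2(B,\Z)\otimes H^2(S,\Z)$ is represented by a $1$-cycle of the form $\{b\}\times D$ with $D$ a divisor on $S$. Hence the integral Hodge conjecture for $1$-cycles on $X$ is equivalent to the assertion that every class in $H^1(B,\Z)\otimes H^3(S,\Z)$ is algebraic. Fixing the generator $\delta\in H^3(S,\Z)=\Z/2$ yields a canonical isomorphism $H^1(B,\Z)\otimes H^3(S,\Z)\cong H^1(B,\Z/2)$.

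The main computation is then to identify, for $Z\in \CH_1(X)$, the component of $[Z]$ in this summand with $Z^*\alpha\in H^1(B,\Z/2)$. Starting from $Z^*\alpha=p_{1*}(p_2^*\alpha\cup [Z])$ and reducing modulo $2$, the Künneth decomposition of $[Z]\bmod 2\in H^4(X,\Z/2)$ shows that only the $H^1(B,\Z/2)\otimes H^3(S,\Z/2)$-component contributes to $p_{1*}$ after cupping with $p_2^*\alpha=1\otimes\alpha$. Writing this component as $\gamma\otimes \bar{\delta}$ with $\gamma\in H^1(B,\Z/2)$ and $\bar\delta$ the reduction of $\delta$, the formula reduces to the identity $\alpha\cup \bar{\delta}=[\mathrm{pt}_S]$ in $H^4(S,\Z/2)=\Z/2$.

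This cup product identity is the crucial point, and the main obstacle. It follows from the fact that the mod $2$ Poincaré duality pairing $H^1(S,\Z/2)\times H^3(S,\Z/2)\to H^4(S,\Z/2)$ is perfect on one-dimensional $\Z/2$-vector spaces, combined with the observation that $\bar\delta\neq 0$: the reduction map $H^3(S,\Z)\to H^3(S,\Z/2)$ is injective because $2\cdot H^3(S,\Z)=0$. Granted this identity, one obtains $Z^*\alpha=\gamma$, so a class $\beta\in H^1(B,\Z/2)$ is of the form $Z^*\alpha$ for some $Z\in\CH_1(X)$ if and only if the class in $H^1(B,\Z)\otimes H^3(S,\Z)$ corresponding to $\beta$ is algebraic. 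Combined with the reductions above, this yields the equivalence in the proposition.
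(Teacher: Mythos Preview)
Your proof is correct and follows essentially the same route as the paper: K\"unneth decomposition of $H^4(X,\Z)$, algebraicity of the $(0,4)$ and $(2,2)$ summands, identification of the $(1,3)$ summand with $H^1(B,\Z/2)$, and the computation $Z^*\alpha=p_{1*}(\cl_2(Z)\smile p_2^*\alpha)$ reduced via Poincar\'e duality on $S$ to the $(1,3)$-component of $[Z]$. The only cosmetic difference is that the paper shows every class in $H^4(X,\Z)$ is Hodge by observing $H^2(X,\sO_X)=0$, whereas you check it summand by summand.
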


\begin{proof}
 Since $H^1(S,\sO_S)=H^2(S,\sO_S)=0$, the group $H^2(X,\sO_X)$ vanishes, so that the integral Hodge conjecture for $1$-cycles on $X$ is the statement that the cycle class map $\cl:\CH_1(X)\to H^4(X,\Z)$ is surjective. The Betti cohomology of $B$ is torsion free, so using the K\"unneth formula \cite[Theorem 3.16]{Hatcher}, we obtain an isomorphism:
\begin{equation}
\label{decompo}
\bigoplus_{i=0}^2 H^i(B,\Z)\otimes H^{4-i}(S,\Z)\xrightarrow{\sim}H^4(X,\Z),
\end{equation}
given by the formula $(a_i\otimes b_{4-i})_{0\leq i\leq 2} \mapsto \sum_{i=0}^2 p_1^*a_i\smile p_2^* b_{4-i}$.
Since the groups $H^0(B,\Z)$, $H^2(B,\Z)$, $H^4(S,\Z)$ and $H^2(S,\Z)$ are generated by classes of algebraic cycles (the last one by the Lefschetz $(1,1)$ theorem for $S$), the factors 
$H^0(B,\Z)\otimes H^4(S,\Z)$ and $H^2(B,\Z)\otimes H^2(S,\Z)$ in the decomposition (\ref{decompo}) consist of algebraic classes.
Consequently, the validity of the integral Hodge conjecture for $1$-cycles on $X$ is equivalent to the surjectivity of the composition $$\phi:\CH_1(X)\to H^1(B,\Z)\otimes H^{3}(S,\Z)$$ of
$\cl: \CH_1(X)\to H^4(X,\Z)$ and of the projection on the  second factor of the decomposition (\ref{decompo}).
 Since the reduction modulo $2$ gives an isomorphism $H^{3}(S,\Z)\xrightarrow{\sim}H^{3}(S,\Z/2)$, the morphism $\phi$ identifies with the composition
$$\CH_1(X)\to H^4(X,\Z/2)\to H^1(B,\Z/2)\otimes H^3(S,\Z/2)$$
 of the cycle class map $\cl_2:\CH_1(X)\to H^4(X,\Z/2)$ modulo $2$ and of the projection 
$\pi:H^4(X,\Z/2)\to H^1(B,\Z/2)\otimes H^3(S,\Z/2)$ given by the K\"unneth formula with $\Z/2$ coefficients.

The map $H^4(X,\Z)\to H^1(B,\Z)$, given by the formula $\gamma\mapsto p_{1*}(\gamma\smile p_2^*\alpha)$, vanishes on the first and the third factor of the decomposition (\ref{decompo}). On the second factor, it coincides with the isomorphism $\iota:H^1(B,\Z/2)\otimes H^3(S,\Z/2)\xrightarrow{\sim} H^1(B,\Z/2)$ induced by the identification $H^3(S,\Z/2)=\Z/2$, since the cup-product with the generator $\alpha\in H^1(S,\Z/2)=\Z/2$ yields an isomorphism $H^3(S,\Z/2)\xrightarrow{\smile\alpha} H^4(S,\Z/2)=\Z/2$ by Poincar\'e duality.
One computes, for $Z\in \CH_1(X)$:
$$Z^*\alpha=p_{1*}(\cl_2(Z)\smile p_2^*\alpha)=p_{1*}(\pi(\cl_2(Z))\smile p_2^*\alpha)=\iota(\phi(Z))\in H^1(B,\Z/2).$$
The morphism $\phi$ is then surjective if and only if $\iota\circ\phi$ is surjective, if and only if every $\beta\in H^1(B,\Z/2)$ is of the form $Z^*\alpha$ for some $Z\in\CH_1(X)$, as wanted.
\end{proof}

\begin{rem}
\label{2torsion}
It follows from the proof of Proposition \ref{whatisIHC} that if the integral Hodge conjecture for $1$-cycles on $X$ fails, then $H^4(X,\Z)$ contains a $2$-torsion class which is non-algebraic. Indeed, the first and third factors of (\ref{decompo}) are algebraic and the second is $2$-torsion because $H^3(S,\Z)=\Z/2$.
\end{rem}

\section{Degeneration to a nodal curve}
\label{sec2}
As in the previous section, we consider an Enriques surface $S$ over $\C$ and let $\alpha\in H^1(S,\Z/2)$ denote the class corresponding to its $\K3$ cover.

In the proof of Theorem \ref{mainth}, we will use a degeneration of finite \'etale double covers of elliptic curves. We construct this degeneration as follows. Let $T=\bA^1\setminus \{1\}$ and $\mathcal{E}\subset \bP^2\times T$ be the Legendre family of elliptic curves defined by the equation $$Y^2Z=X(X-Z)(X-tZ),$$ 
where $X,Y,Z$ are homogenous coordinates in $\bP^2$ and $t$ is the coordinate of $\bA^1$. The fibers of the second projection $f:\mathcal{E}\to T$ are smooth elliptic curves except for the one above $0\in T$, that is a nodal rational curve, with one singular point $x=[0:0:1]\in \mathcal{E}_0$. 

The two constant sections of $f$ given by $[0:1:0]$  and $[1:0:1]$ are the identity and a $2$-torsion point for the group law on the $T$-group scheme $\mathcal{E}\setminus\{x\}\to T$. Since $\mathcal{E}$ is regular and $f$-minimal, the translation by this $2$-torsion section extends to an involution  $i$ of $\mathcal{E}$ (see \cite[Chapter 9, Proposition 3.13]{Liu}). Let $q:\mathcal{E}\to \mathcal{F}$ be the quotient by $i$, and let $g:\mathcal{F}\to T$ be the induced morphism. Since $x$ is the only fixed point of $i$, the quotient map $q$ is \'etale outside of $x$. If $t\neq 0$, the curve $\mathcal{F}_t$ is elliptic as a fixed point free quotient of the elliptic curve $\mathcal{E}_t$. Being dominated by a rational curve, the special fiber $\mathcal{F}_0$ is itself a (singular) rational curve.

\begin{prop}
\label{elliptic}
There exists an elliptic curve $E$ over $\C$ such that the integral Hodge conjecture for $1$-cycles on $E\times S$ does not hold.
\end{prop}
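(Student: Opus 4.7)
The plan is to argue by contradiction via a degeneration argument using the Legendre family constructed above. Suppose that for every $t$ in a very general subset of $T^* := T \setminus \{0\}$ the integral Hodge conjecture holds for $\mathcal{F}_t \times S$. By Proposition \ref{whatisIHC}, this produces, for each such $t$, a cycle $Z_t \in \CH_1(\mathcal{F}_t \times S)$ with $Z_t^*\alpha = \beta_t$, the class of the \'etale double cover $\mathcal{E}_t \to \mathcal{F}_t$. Since the relative Hilbert scheme of $\mathcal{F} \times S \to T$ is a countable disjoint union of components projective over $T$ and the equation $Z^*\alpha = \beta$ is locally constant on each, a Baire category argument yields one component whose image dominates $T$. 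After a finite surjective base change $p\colon C' \to T$ with $c \in p^{-1}(0)$ and taking flat closures, this gives a $1$-cycle $\mathcal{Z} \subset \mathcal{F}_{C'} \times S$, flat over $C'$, satisfying $Z_{u'}^*\alpha = \beta_{p(u')}$ for every $u'$ above $T^*$, with a well-defined specialization $Z_c \in \CH_1(\mathcal{F}_0 \times S)$.

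The heart of the argument is a cohomological discrepancy at the nodal fiber. Let $\bar x \in \mathcal{F}_0$ be the image of the node of $\mathcal{E}_0$, set $\mathcal{F}^\flat := \mathcal{F} \setminus \{\bar x\}$, and let $\tilde\beta \in H^1(\mathcal{F}^\flat, \Z/2)$ denote the class of the \'etale cover $\mathcal{E} \setminus q^{-1}(\bar x) \to \mathcal{F}^\flat$. Since $\mathcal{E}_0 \setminus \{x\} \to \mathcal{F}_0 \setminus \{\bar x\}$ identifies with the double cover $\C^* \to \C^*$, $z \mapsto z^2$, the restriction of $\tilde\beta$ to $\mathcal{F}_0' := \mathcal{F}_0 \setminus \{\bar x\}$ is the non-zero class in $H^1(\mathcal{F}_0', \Z/2) = \Z/2$. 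On the other hand, $\mathcal{F}_0$ is topologically a $2$-sphere with two points identified, so the generator of $\pi_1(\mathcal{F}_0') \simeq \Z$ is null-homotopic under the inclusion $\mathcal{F}_0' \hookrightarrow \mathcal{F}_0$, and the pullback map $H^1(\mathcal{F}_0, \Z/2) \to H^1(\mathcal{F}_0', \Z/2)$ therefore vanishes.

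To combine these observations, define $\Gamma := p_{1*}\bigl(\cl(\mathcal{Z}|_{\mathcal{F}^\flat_{C'} \times S}) \smile p_2^*\alpha\bigr) \in H^1(\mathcal{F}^\flat_{C'}, \Z/2)$; base change compatibility of the cycle class and proper push-forward implies that $\Gamma$ and $\tilde\beta|_{\mathcal{F}^\flat_{C'}}$ have equal fiberwise restrictions over $C' \setminus \{c\}$. A specialization argument on a small contractible analytic disc $D \subset C'$ around $c$ (e.g.\ the Leray spectral sequence for $\mathcal{F}^\flat_D \to D$ together with the vanishing of $H^1(D, \Z/2)$) then forces the global equality $\Gamma|_{\mathcal{F}^\flat_D} = \tilde\beta|_{\mathcal{F}^\flat_D}$, so in particular $(Z_c^*\alpha)|_{\mathcal{F}_0'} = \tilde\beta|_{\mathcal{F}_0'} \neq 0$. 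But $Z_c^*\alpha$ lies in $H^1(\mathcal{F}_0, \Z/2)$, whose restriction to $H^1(\mathcal{F}_0', \Z/2)$ vanishes, a contradiction. The main obstacle is the Hilbert-scheme / specialization step producing $\mathcal{Z}$ and $Z_c$, and in particular verifying that $Z \mapsto Z^*\alpha$ is compatible with restriction to $\mathcal{F}^\flat$ and with specialization at $c$ despite the singularity of $\mathcal{F}_0$; once this technical framework is in place, the topological inputs are routine.
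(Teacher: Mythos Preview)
Your strategy is exactly the paper's: assume the integral Hodge conjecture holds for a very general fiber, spread the cycle over $T$ via the relative Hilbert scheme after a finite base change, and obtain a contradiction at the nodal central fiber by comparing the double cover coming from the correspondence with the double cover $\mathcal{E}_0\to\mathcal{F}_0$. The two substantive differences lie in how you handle the two technical obstacles you yourself flag.

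First, to pass from ``$\Gamma$ and $\tilde\beta$ agree fiberwise over $D\setminus\{c\}$'' to equality over $D$, your Leray argument is incomplete: since $R^1g'_*\Z/2$ is not locally constant at $c$ (the rank drops from $2$ to $1$), a section vanishing on $D\setminus\{c\}$ need not vanish at $c$. The paper instead works over the open locus $U=\mu^{-1}(T\setminus\{0\})$, uses Leray there (where the sheaf \emph{is} locally constant) to conclude $[\mathcal{E}'_U]-[\mathcal{G}'_U]\in H^1(U,\Z/2)$, and then passes to a further degree-$2$ cover of $T'$ to kill this class. Second, and more seriously, your assertion that ``$Z_c^*\alpha$ lies in $H^1(\mathcal{F}_0,\Z/2)$'' is not justified: since $\mathcal{F}_0$ is singular, the correspondence action naturally lands in Borel--Moore homology $H_1^{BM}(\mathcal{F}_0,\Z/2)$, not in $H^1$, and these differ without Poincar\'e duality. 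The paper circumvents this by taking a resolution $\widetilde{\mathcal{F}}\to\mathcal{F}'$ and the \emph{strict transform} $\widetilde{\mathcal{Z}}$; on the smooth $\widetilde{\mathcal{F}}$ the class $\widetilde{\mathcal{Z}}^*\alpha$ genuinely lives in $H^1(\widetilde{\mathcal{F}},\Z/2)$, the strict transform of $\mathcal{F}'_0$ is $\bP^1$, and the \'etale cover must split there---while the pullback of $q$ does not, since $\mathcal{E}_0$ is irreducible. Your topological observation that $H^1(\mathcal{F}_0,\Z/2)\to H^1(\mathcal{F}_0',\Z/2)$ vanishes is essentially dual to this, but the resolution is what makes the argument go through cleanly.
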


\begin{proof}
 Let $t\in T$ be a very general point and suppose for contradiction that the integral Hodge conjecture for $1$-cycles on $\mathcal{F}_t\times S$ holds. Applying Proposition \ref{whatisIHC} to the class $\beta\in H^1(\mathcal{F}_t,\Z/2)$ of the finite \'etale double cover $q_t:\mathcal{E}_t\to\mathcal{F}_t$ shows that there exists $Z\in \CH_1(\mathcal{F}_t\times S)$ such that $Z^*\alpha=\beta\in H^1(\mathcal{F}_t,\Z/2)$.

Since $t$ is very general, all the irreducible components of the relative Hilbert scheme of $g\circ pr_1:\mathcal{F}\times S\to T$ whose images contain $t$ dominate $T$. This applies to the components of this relative Hilbert scheme parametrizing the components of the support of $Z$. Since the components of the relative Hilbert scheme are moreover proper over $T$, it follows that there exists a finite surjective morphism $\mu:T'\to T$ of smooth integral curves with the following property. Let $t'\in T'$ be a preimage of  $t\in T$ by $\mu$, and let $g':\mathcal{F}'\to T'$ and $q':\mathcal{E}'\to \mathcal{F}'$ be the morphisms obtained from $g$ and $q$ by base change by $\mu$. Then there exists a cycle $\mathcal{Z}\in \CH^2(\mathcal{F}'\times S)$ all of whose components dominate $T'$ such that $\mathcal{Z}|_{\mathcal{F}'_{t'}\times S}=Z$.

Let $U=\mu^{-1}(T\setminus\{0\})\subset T'$. We consider the two following finite \'etale double covers of the smooth variety $\mathcal{F}'_U=g'^{-1}(U)$. The first is the restriction $\mathcal{E}'_U\to \mathcal{F}'_U$ of $q'$. The second, that we will denote by $\mathcal{G}'_U\to\mathcal{F}'_U$, is the one associated to the class $(\mathcal{Z}|_{\mathcal{F}'_U\times S})^*\alpha\in H^1(\mathcal{F}'_U,\Z/2)$.
The classes of these two double covers coincide in $H^1(\mathcal{F}'_{t'},\Z/2)$ because 
$(\mathcal{Z}|_{\mathcal{F}'_U\times S})^*\alpha|_{\mathcal{F}'_{t'}}=Z^*\alpha=\beta$. Since $U$ is connected and $R^1g'_*\Z/2$ is locally constant on $U$, the exact sequence
$$0\to H^1(U,\Z/2)\to H^1( \mathcal{F}'_U,\Z/2)\to H^0(U, R^1g'_*\Z/2)$$
induced by the Leray spectral sequence of $g'$ shows that 
$[\mathcal{E}'_U]-[\mathcal{G}'_U]$ belongs to the subgroup $H^1(U,\Z/2)$ of $H^1( \mathcal{F}'_U,\Z/2)$.
Consequently, up to replacing $T'$ by a finite double cover (maybe ramified outside of $U$), we may assume that 
$\mathcal{E}'_U$ and $\mathcal{G}'_U$ are isomorphic as finite \'etale double covers of $\mathcal{F}'_U$.

 Pick a preimage of $0\in T$ under $\mu:T'\to T$, and denote it, for simplicity, also by $0\in T'$. 
Let $\widetilde{\mathcal{F}}\to \mathcal{F}'$ be a resolution of singularities which is an isomorphism over $\mathcal{F}'_U$, and let $\widetilde{\mathcal{Z}}\in  \CH^2(\widetilde{\mathcal{F}}\times S)$ denote the strict transform of $\mathcal{Z}$. 
Let $\widetilde{q}:\widetilde{\mathcal{E}}\to\widetilde{\mathcal{F}}$ be the base change of $q'$, and let $\widetilde{\mathcal{G}}\to\widetilde{\mathcal{F}}$ be the finite \'etale double cover of $\widetilde{\mathcal{F}}$ corresponding to the class $\widetilde{\mathcal{Z}}^*\alpha\in H^1(\widetilde{\mathcal{F}},\Z/2\Z)$. We have shown above that the two finite covers $\widetilde{\mathcal{E}}$ and $\widetilde{\mathcal{G}}$ of $\widetilde{\mathcal{F}}$ 
are isomorphic over $\mathcal{F}'_U$. By \cite[Exp. I, Corollaire 10.3]{SGA1}, it follows that they coincide over the locus where  $\widetilde{q}$ is \'etale. 
In particular, they coincide at the generic point $\eta$ of the strict transform of $\mathcal{F}'_0$ in $\widetilde{\mathcal{F}}$. However, the cover $\widetilde{\mathcal{G}}\to\widetilde{\mathcal{F}}$ splits at $\eta$ since it is finite \'etale and the normalization of $\mathcal{F}'_0$ is isomorphic to $\bP^1$, whereas 
$\widetilde{q}:\widetilde{\mathcal{E}}\to\widetilde{\mathcal{F}}$ does not split at $\eta$ because the fiber $\mathcal{E}'_0=\mathcal{E}_0$ is irreducible. This is the required contradiction.
\end{proof}

We may now prove our main result:

\begin{proof}[Proof of Theorem \ref{mainth}]
Let $E$ be an elliptic curve as in Proposition \ref{whatisIHC}, and let $g\geq 1$ be an integer. Choose a smooth projective curve $B$ of genus $g$ that admits a morphism $\rho:B\to E$ of odd degree $\delta$.
By Remark \ref{2torsion}, there exists a $2$-torsion cohomology class $\omega\in H^4(E\times S,\Z)$ which is not algebraic. The pullback $(\rho,\Id)^*\omega\in H^4(B\times S,\Z)$ is also 2-torsion and not algebraic; if it were, then the same would be true for $(\rho,\Id)_*(\rho,\Id)^*\omega=\delta\omega=\omega$, a contradiction. Consequently, the integral Hodge conjecture for $1$-cycles on $B\times S$ does not hold.

\def\B{\mathcal B}
To prove the statement for the very general curve, we use a specialization argument similar to that in \cite{Trento} and \cite[\S 6.1.2]{Voisinorange}. To give some details, let $T$ be a connected component of the moduli space of smooth genus $g$ curves over $\C$ with level $4$ structure, and let $\pi:\B\to T$ be its universal family. As the moduli space of smooth curves of genus $g$ is connected, every smooth curve of genus $g$ appears as a fiber of $\pi$.
 We say that a point $t\in T$ is very general if every irreducible component of the relative Hilbert scheme of $\mathcal{B}\times S\to T$ whose image in $T$ contains $t$ dominates $T$. Arguing as in the proof of Proposition \ref{elliptic}
shows that if the integral Hodge conjecture holds for some $\B_t\times S$ with $t\in T$ very general, then it holds for $\B_t\times S$ for all $t\in T$, which contradicts the conclusion of the above paragraph. 
\end{proof}


\begin{rem}
 Combining our argument with the idea of Hassett and Tschinkel to use specializations in characteristic $p$ (see \cite[Remarque 5.9]{CTV} and \cite{Totaro}) yields explicit examples of varieties as in Theorem \ref{mainth} or Corollary \ref{maincoro}, that are moreover defined over $\Q$. More precisely, if $S$ is a complex Enriques surface that is defined over $\Q$, one may choose the elliptic curve in Proposition \ref{elliptic} to have equation $Y^2Z=X(X-Z)(X-pZ)$ for any odd prime number $p$ of good reduction of $S$.
\end{rem}

\begin{cor}
\label{allKodairadim}
For any $n\ge 3$, any $\kappa\in \{-\infty,0,1,\ldots,n\}$ (with $\kappa\ge 0$ if $n=3$) and any $k\in\{2,\dots,n-1\}$, there is a smooth projective variety $X$ of dimension $n$ and Kodaira dimension $\kappa$ such that the integral Hodge conjecture fails for codimension $k$ cycles on $X$.
\end{cor}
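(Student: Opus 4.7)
The plan is to bootstrap from the threefold counterexamples of Theorem \ref{mainth} and Remark \ref{2torsion} to all triples $(n,\kappa,k)$ in the statement, using two operations that preserve the existence of a non-algebraic $2$-torsion Hodge class: products with auxiliary varieties carrying complementary algebraic Hodge classes of odd top intersection, and generically finite covers of odd degree (in particular, triple cyclic covers, since $3$ is coprime to $2$).

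I would first record two push--pull lemmas. (i) If a smooth projective $V$ carries a non-algebraic $2$-torsion Hodge class $\omega\in H^{2k_0}(V,\Z)$, and $Y$ is a smooth projective variety of dimension $d$ carrying algebraic Hodge classes $\gamma\in H^{2j}(Y,\Z)$ and $\delta\in H^{2(d-j)}(Y,\Z)$ with $\int_Y \gamma\cup\delta$ odd, then $p_1^*\omega\cup p_2^*\gamma\in H^{2(k_0+j)}(V\times Y,\Z)$ is again non-algebraic $2$-torsion Hodge: were it of the form $\cl(Z)$, cupping with the algebraic class $p_2^*\delta$ and pushing forward along $p_1$ would display $(\int_Y\gamma\cup\delta)\cdot\omega=\omega$ as algebraic on $V$, contradicting the hypothesis. (ii) If $\pi:W\to V$ is a generically finite morphism of smooth projective varieties of odd degree, the projection formula identity $\pi_*\pi^*=\deg(\pi)\cdot\mathrm{Id}$ shows that $\pi^*$ sends non-algebraic $2$-torsion Hodge classes to non-algebraic $2$-torsion Hodge classes.

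Next I would assemble threefold building blocks $W_0$ of each Kodaira dimension $\kappa_0\in\{0,1,2,3\}$, each equipped with a non-algebraic $2$-torsion Hodge class in $H^4(W_0,\Z)$. For $\kappa_0\in\{0,1\}$ take $W_0=B_0\times S$, where $B_0$ is an elliptic curve or a very general curve of genus $\ge 2$, respectively; these are supplied by Theorem \ref{mainth}. For $\kappa_0\in\{2,3\}$ take $W_0=B_0\times \widetilde{S}$, where $\widetilde{S}\to S$ is the triple cyclic cover of $S$ branched along a smooth member of $|3L_S|$ for some very ample $L_S$: since $K_S$ is $2$-torsion, the line bundle $K_S+2L_S$ is ample, and the cyclic cover formula $K_{\widetilde{S}}=\pi_S^*(K_S+2L_S)$ then shows that $\widetilde{S}$ is of general type, whence $\kappa(W_0)=\kappa(B_0)+2\in\{2,3\}$. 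In both of these cases Lemma (ii), applied to the degree-$3$ map $(\mathrm{Id}_{B_0},\pi_S):W_0\to B_0\times S$, transfers the non-algebraic class from the counterexample $B_0\times S$ to $W_0$.

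Finally, given $(n,\kappa,k)$ as in the statement, decompose $\kappa=\kappa_0+\kappa_Y$ with $\kappa_0\in\{0,1,2,3\}$ and $\kappa_Y\in\{-\infty,0,1,\ldots,n-3\}$ (always feasible within the stated range, with $\kappa_Y=0$ and $Y=\mathrm{pt}$ when $n=3$). Take a smooth projective $Y$ of dimension $n-3$ with $\kappa(Y)=\kappa_Y$: realise it as $(\bP^1)^{n-3}$ when $\kappa_Y=-\infty$, and otherwise as a product of $\kappa_Y$ curves of genus $\ge 2$ together with $n-3-\kappa_Y$ elliptic curves. On such a product of curves and projective lines, for every $j\in\{0,\ldots,n-3\}$ the required algebraic Hodge classes $\gamma\in H^{2j}(Y,\Z)$ and $\delta\in H^{2(n-3-j)}(Y,\Z)$ with $\int_Y\gamma\cup\delta=1$ are obtained as the products of point classes (or hyperplane classes on $\bP^1$ factors) over any two complementary subsets of the index set. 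Setting $X=W_0\times Y$ yields $\dim X=n$ and $\kappa(X)=\kappa_0+\kappa_Y=\kappa$, while Lemma (i) furnishes a non-algebraic $2$-torsion Hodge class of codimension $k=2+j$ for every $k\in\{2,\ldots,n-1\}$. The only real obstacle is handling Kodaira dimensions $\{2,3\}$ on a threefold: since $K_S$ is $2$-torsion these values are unreachable by naive products of $B$ and $S$, and the odd-degree cyclic cover $\widetilde{S}\to S$ is precisely what allows one to lift past this barrier without destroying the non-algebraic class.
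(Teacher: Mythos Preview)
Your argument is correct and follows essentially the same route as the paper. Both proofs build threefold seeds of Kodaira dimensions $0,1,2,3$ by replacing $S$ with an odd-degree cover of general type when needed, then take products with suitably chosen curves (including $\bP^1$'s for $\kappa=-\infty$) and use point classes on the curve factors to realise every codimension $k\in\{2,\dots,n-1\}$, checking non-algebraicity via the projection formula; the only difference is that you construct the general-type surface explicitly as a triple cyclic cover, whereas the paper simply posits a surface $\Sigma$ of general type with an odd-degree map to $S$.
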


\begin{proof}
Let $S$, $E$ and $B$ be as in the proof of Theorem \ref{mainth}, with $B$ of genus $g\geq 2$. Let $\Sigma$ be a smooth projective surface of general type admitting an odd degree morphism to $S$. Arguing as above, we see that the threefolds $E\times S$, $B\times S$, $E\times \Sigma$ and $B\times \Sigma$ carry $2$-torsion non-algebraic cohomology classes of degree $4$. Their Kodaira dimensions are respectively $0$, $1$, $2$ and $3$. 

Let $Y$ be one of the above threefolds and let $\omega\in H^4(Y,\Z)$ be a non-algebraic $2$-torsion class.
Let $C_1,\dots,C_{n-3}$ be smooth projective connected curves, and let $\omega_i\in H^2(C_i,\Z)$ be the class of a point. Define $X=Y\times \prod_i C_i$ with projections $q:X\to Y$ and $p_i:X\to C_i$. The $2$-torsion class $\gamma=q^*\omega\smile \cup_{i=1}^{k-2} p_i^*\omega_i$ is not algebraic because so would be $\omega=q_*(\gamma\smile\cup_{i=k-1}^{n-3}p_i^*\omega_i)$. Choosing $Y$ and the genera of the $C_i$ appropriately produces all the required counterexamples.
\end{proof}

\section{Products for which the integral Hodge conjecture holds}
\label{IHCholds}
In light of Theorem \ref{mainth}, it is natural to ask whether the integral Hodge conjecture in fact fails on every product of an Enriques surface and an elliptic curve. In this section, we show that this is not the case, by constructing explicit examples of $S$ and $E$ such that every class of $H^1(E,\mathbb Z/2)$ satisfies the condition in Proposition \ref{whatisIHC}.

We consider an Enriques surface $S$ admitting an elliptic fibration with a double fiber whose reduction is an elliptic curve $E$. In this case, $E$ does not split in the $\K3$ cover of $S$  (see \cite[Chapter VIII.17]{BPVdV}), and so there exists a non-zero class $\beta\in H^1(E,\Z/2\Z)$ satisfying the hypothesis of Proposition \ref{whatisIHC}. If we choose $S$ so that $E$ is  the elliptic curve $\{Y^2Z=X^3+Z^3\}\subset\bP^2$, the automorphisms of $E$ act transitively on the non-zero elements of $H^1(E,\Z/2)$. It follows that all of these classes satisfy the hypothesis of Proposition \ref{whatisIHC}, and consequently the integral Hodge conjecture holds for $1$-cycles on $E\times S$.

We can construct such $S$ and $E$ explicitly using the construction described in \cite[Chapter V.23]{BPVdV}. There, Enriques surfaces are constructed as quotients by a fixed point free involution of the minimal resolution of a double cover of $\bP^1\times \bP^1$ branched along an invariant $(4,4)$-curve. The projection on one of the factors induces an elliptic fibration on the Enriques surface. It is easily seen from the equations of  \emph{loc.\ cit.} that any elliptic curve may appear as the reduction of a double fiber of an elliptic fibration arising from this construction.


\bibliographystyle{myamsalpha}
\bibliography{IHCEnriques}

\providecommand{\bysame}{\leavevmode\hbox to3em{\hrulefill}\thinspace}
\providecommand{\MR}{\relax\ifhmode\unskip\space\fi MR }
\providecommand{\MRhref}[2]{%
  \href{http://www.ams.org/mathscinet-getitem?mr=#1}{#2}
}
\providecommand{\href}[2]{#2}
\begin{thebibliography}{BHPVdV15}

\bibitem[AH62]{AH}
M.~F. Atiyah and F.~Hirzebruch, \emph{Analytic cycles on complex manifolds},
  Topology \textbf{1} (1962), 25--45.

\bibitem[BCC92]{Trento}
E.~Ballico, F.~Catanese and C.~Ciliberto, \emph{Trento examples},
  Classification of irregular varieties ({T}rento, 1990), Lecture Notes in
  Math., vol.~1515, Springer, Berlin, 1992, pp.~134--139.

\bibitem[Bea77]{Beauville}
A.~Beauville, \emph{Prym varieties and the {S}chottky problem}, Invent. Math.
  \textbf{41} (1977), no.~2, 149--196.

\bibitem[BHPVdV15]{BPVdV}
W.~Barth, K.~Hulek, C.~Peters and A.~Van~de Ven, \emph{Compact complex
  surfaces}, vol.~4, Springer, 2015.

\bibitem[CT02]{CTGabber}
J.-L. Colliot-Th\'el\`ene, \emph{Exposant et indice d'alg\`ebres simples
  centrales non ramifi\'ees}, Enseign. Math. \textbf{48} (2002), no.~1-2,
  127--146, With an appendix by Ofer Gabber.

\bibitem[CT18]{colliot}
J.-L. Colliot-Th{\'e}l{\`e}ne, \emph{Cohomologie non ramifi\'ee dans le produit
  avec une courbe elliptique}, arXiv preprint arXiv:1802.04372 (2018).

\bibitem[CTV12]{CTV}
J.-L. Colliot-Th{\'e}l{\`e}ne and C.~Voisin, \emph{Cohomologie non ramifi\'ee
  et conjecture de {H}odge enti\`ere}, Duke Math. J. \textbf{161} (2012),
  no.~5, 735--801.

\bibitem[Flo13]{Floris}
E.~Floris, \emph{Fundamental divisors on {F}ano varieties of index {$n-3$}},
  Geom. Dedicata \textbf{162} (2013), 1--7.

\bibitem[Hat02]{Hatcher}
A.~Hatcher, \emph{Algebraic topology}, Cambridge University Press, Cambridge,
  2002.

\bibitem[HV11]{HV}
A.~H{\"o}ring and C.~Voisin, \emph{Anticanonical divisors and curve classes on
  {F}ano manifolds}, Pure Appl. Math. Q. \textbf{7} (2011), no.~4, 1371--1393.

\bibitem[Liu02]{Liu}
Q.~Liu, \emph{Algebraic geometry and arithmetic curves}, Oxford Graduate Texts
  in Mathematics, vol.~6, Oxford University Press, Oxford, 2002.

\bibitem[Sch18]{Stefan}
S.~Schreieder, \emph{Stably irrational hypersurfaces of small slopes}, preprint
  2018, arXiv:1801.05397.

\bibitem[SGA1]{SGA1}
A.~Grothendieck, \emph{Rev\^etements \'etales et groupe fondamental ({SGA} 1)},
  Documents Math\'ematiques, vol.~3, Soci\'et\'e Math\'ematique de France,
  Paris, 2003, S\'eminaire de g\'eom\'etrie alg\'ebrique du Bois-Marie
  1960--61, updated and annotated reprint of the 1971 original.

\bibitem[SV05]{SV}
C.~Soul\'e and C.~Voisin, \emph{Torsion cohomology classes and algebraic cycles
  on complex projective manifolds}, Adv. Math. \textbf{198} (2005), no.~1,
  107--127.

\bibitem[Tot13]{Totaro}
B.~Totaro, \emph{On the integral {H}odge and {T}ate conjectures over a number
  field}, Forum Math. Sigma \textbf{1} (2013), e4, 13 pages.

\bibitem[Voi06]{Voisin3}
C.~Voisin, \emph{On integral {H}odge classes on uniruled or {C}alabi-{Y}au
  threefolds}, Moduli spaces and arithmetic geometry, Adv. Stud. Pure Math.,
  vol.~45, Math. Soc. Japan, Tokyo, 2006, pp.~43--73.

\bibitem[Voi07]{VoisinHC}
\bysame, \emph{Some aspects of the {H}odge conjecture}, Jpn. J. Math.
  \textbf{2} (2007), no.~2, 261--296.

\bibitem[Voi14]{Voisinorange}
\bysame, \emph{Chow rings, decomposition of the diagonal, and the topology of
  families}, Annals of Mathematics Studies, vol.~187, Princeton University
  Press, Princeton, NJ, 2014.

\end{thebibliography}

\end{document}